\date{}
\newtheorem{theorem}{Theorem}
\newtheorem{lemma}[theorem]{Lemma}
\newtheorem{proposition}[theorem]{Proposition}
\newtheorem{question}[theorem]{Question}
\newtheorem{remark}[theorem]{Remark}
\def\SS{\pm^{\mathbb N}}
\begin{document}
\title[The fair soup division and approximating numbers]
{The fair soup division and approximating numbers}

\author[Alex Ravsky]{Alex Ravsky}
\address{Pidstryhach Institute for Applied Problems of Mechanics and Mathematics
National Academy of Sciences of Ukraine}
\email{alexander.ravsky@uni-wuerzburg.de}
\keywords{recreational mathematics, simultaneous approximations,
asymptotic approximation, geometric series, series, summation, polynomial}
\subjclass{00A08, 41A28, 41A60, 40A25}

\begin{abstract}
We consider a recent The Vee's fair soup division problem,
provide its partial solution, and pose a related open problem.
\end{abstract}

\dedicatory{Dedicated to the personnel of canteen at Karpenko Physico-Mechanical Institute,
Lviv, Ukraine}

\maketitle \baselineskip15pt

Food division problems were always interesting for mathematicians, especially before a dinner.
For instance, we can divide cakes~\cite{KK}, \cite{Rav}, \cite{S-HS}, \cite[Prob. 51]{Sht}, \cite{use},
pancakes~\cite[$\S$ 11]{CS}, \cite[$\S$ 7]{Sha}, pizzas~\cite{Wik}, ham sandwiches~\cite{Wik2},
and fried eggs~\cite{Tyr}.

This paper is devoted to the following Soup Division Problem $SDP(q)$ (where $0<q<1$), posed recently by
The Vee on Mathematics StackExchange~\cite{Vee}. We are given an unbounded bowl of soup,
a bounded ladle, and two plates, marked "$+$" and "$-$". We consecutively scoop a full ladle of
the soup from the bowl and pour it to one of plates.
The soup contains two nutritious stuffs. The first of them
is evenly dissolved in the volume; the second is floated to the surface and
every scoop takes a part $1-q$ of it, so its amount goes down geometrically with a quotient of $q$.

Let $\pm^{\mathbb N}$ be a set of sequences of signs "$+$" and "$-$".
We say that $(*)_i\in\SS$ is an
\emph{asymptotically fair division} for $SDP(q)$ if an order $(*)_i$ of plates to pour
provides asymptotically equal shares of both stuffs to them. That is,
$\sum_{i=1}^n *_i1=o(n)$ and $\lim_{n\to\infty} \sum_{i=1}^n *_iq^i =0.$
Moreover, $(*)_i$ is a \emph{boundedly fair division}, if $\sum_{i=1}^n *_i1=O(1)$.

There is the following natural

\begin{question}\label{q1} For which numbers $q\in (0,1)$, $SDP(q)$ has an asymptotically (boundedly)
fair division?
\end{question}

Answering it, we consider periodic divisions first. It is easy to show the following

\begin{proposition}\label{periodic} Let $0<q<1$ and $(*)_i\in\SS$ be a sequence with a period $N$.
The following conditions are equivalent:
\begin{itemize}
\item $(*)_i$ is an asymptotically fair division for $SDP(q)$;
\item $(*)_i$ is a boundedly fair division for $SDP(q)$;
\item $\sum_{i=1}^N *_i1= \sum_{i=1}^N *_iq^i =0$.
\end{itemize}
\end{proposition}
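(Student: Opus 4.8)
The plan is to prove the three-way equivalence by establishing the cycle: the third condition implies the second, the second implies the first (this is immediate from the definitions, since $O(1)$ is $o(n)$ and the geometric tail always tends to $0$ regardless of signs), and the first implies the third. So the real content is the implications between the periodicity-free analytic conditions and the single finite identity $\sum_{i=1}^N *_i = \sum_{i=1}^N *_i q^i = 0$.

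First I would set up notation: write $S_n = \sum_{i=1}^n *_i$ and $T_n = \sum_{i=1}^n *_i q^i$, and let $A = \sum_{i=1}^N *_i$ and $B = \sum_{i=1}^N *_i q^i$ be the ``one-period'' sums. For the direction ``third $\Rightarrow$ second'': when $A=0$, periodicity gives $S_{kN+r} = kA + S_r = S_r$, so $S_n$ only takes the finitely many values $S_1,\dots,S_N$ and is therefore $O(1)$, i.e.\ $(*)_i$ is a boundedly fair division. For ``second $\Rightarrow$ first'': trivial, since $S_n = O(1)$ forces $S_n = o(n)$, and $T_n = \sum_{i=1}^n *_i q^i$ converges absolutely (dominated by $\sum q^i$), and one checks its limit is $0$ using the period structure — actually this needs the $B=0$ part, so it is cleaner to note that ``second $\Rightarrow$ third $\Rightarrow$ first'' via the converse arguments, or simply to prove ``first $\Rightarrow$ third'' and ``third $\Rightarrow$ (first and second)'' and observe ``second $\Rightarrow$ first'' is immediate, closing the loop. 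I would structure it as: third $\Rightarrow$ first and third $\Rightarrow$ second (both easy from periodicity), second $\Rightarrow$ first (immediate), and then the crux, first $\Rightarrow$ third.

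For the crux direction, assume $(*)_i$ is an asymptotically fair division. From $S_n = o(n)$: by periodicity $S_{kN} = kA$, and $kA = o(kN)$ as $k\to\infty$ forces $A = 0$. For the geometric part, I would compute the limit of $T_n$ explicitly. Grouping the series into blocks of length $N$, $\sum_{i=1}^\infty *_i q^i = \sum_{k=0}^\infty q^{kN}\sum_{r=1}^N *_r q^r = B \cdot \frac{1}{1-q^N}$ (using that $*_{kN+r} = *_r$ and $q^{kN+r} = q^{kN}q^r$). Since $\frac{1}{1-q^N} \neq 0$, the hypothesis $\lim T_n = 0$ gives $B = 0$. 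This establishes $A = B = 0$, completing first $\Rightarrow$ third.

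The main obstacle — though it is more of a bookkeeping subtlety than a genuine difficulty — is making sure the block-summation of the geometric series is justified and the partial sums $T_n$ (not just the full series) are handled correctly; absolute convergence of $\sum |*_i| q^i = \sum q^i$ makes rearrangement and regrouping legitimate, so once that is observed the computation is routine. I should also be mildly careful that a ``period $N$'' sequence might have a smaller minimal period, but nothing in the argument requires $N$ to be minimal, so this causes no trouble. The rest is definitional unwinding.
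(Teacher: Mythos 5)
Your argument is correct and complete: the paper itself gives no proof (it only asserts the proposition is easy to show), and your cycle of implications, with $S_{kN}=kA=o(kN)$ forcing $A=0$ and the absolutely convergent block-sum identity $\sum_{i\ge 1}*_iq^i=B/(1-q^N)$ forcing $B=0$ (and conversely giving $\lim T_n=0$ when $B=0$), is exactly the natural way to fill it in. No gaps.
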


It is easy to check that the shortest period of a
periodical asymptotically fair division for $SDP(q)$ is $6$ (for instance,
for a sequence $+---++\dots$ and $q=\varphi^{-1}$, where $\varphi$ is the golden ratio).

The last condition of Proposition~\ref{periodic} suggests the following notion.
A \emph{balanced $\pm$-polynomial} is a polynomial $P(x)$ of the form
$\sum_{i=1}^n \pm x^i$ such that $P(1)=0$. Then for each $q\in (0,1)$, by Proposition~\ref{periodic},
$SDP(q)$ has a periodic asymptotically fair division iff $q$ is a root of a balanced $\pm$-polynomial.



Now we consider general fair divisions.
If $0<q\le 1/2$ then $q\ge\sum_{i=2}^\infty q^i$, so $SDP(q)$ has no asymptotically fair division.

To obtain a partial answer for Question~\ref{q1}, we need the following

\begin{lemma}\label{lem1}
Let $(a_n)$ be an absolutely convergent series of real numbers such that for any natural $k$ we have
$$|a_{2k-1}-a_{2k}|\le \sum_{n=k+1}^\infty |a_{2n-1}-a_{2n}|.\label{1}\eqno{(1)}$$
Then there exists a sequence $(*)_i\in \pm^{\mathbb N}$ of signs
such that  $*_{2k-1}1+*_{2k}1=0$ for each natural $k$
and $\sum_{i=1}^\infty *_ia_i=0$.
\end{lemma}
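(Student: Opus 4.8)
The plan is to reduce the claim to a one‑sided ``greedy balancing'' statement for a single absolutely convergent series. Set $b_k := a_{2k-1}-a_{2k}$ and $R_k := \sum_{n=k+1}^{\infty}|b_n|$. Absolute convergence of $(a_n)$ gives $\sum_k|b_k|\le\sum_n|a_n|<\infty$, so every $R_k$ is finite and $R_k\to 0$; moreover hypothesis~(1) says precisely that $|b_k|\le R_k$ for all $k$. Note that once we have signs $\varepsilon_k\in\{+1,-1\}$ with $\sum_k\varepsilon_k b_k=0$, the assignment $*_{2k-1}:=\varepsilon_k$, $*_{2k}:=-\varepsilon_k$ works: indeed $*_{2k-1}1+*_{2k}1=0$ for every $k$, and since $\sum_i|*_i a_i|=\sum_i|a_i|<\infty$ we may group the sum in pairs, the pair $(2k-1,2k)$ contributing $\varepsilon_k a_{2k-1}-\varepsilon_k a_{2k}=\varepsilon_k b_k$, so $\sum_{i=1}^{\infty}*_i a_i=\sum_{k=1}^{\infty}\varepsilon_k b_k=0$.

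To produce the signs $\varepsilon_k$ I would argue greedily, always steering the partial sums toward $0$. Put $s_0:=0$, and having chosen $\varepsilon_1,\dots,\varepsilon_{k-1}$ with $s_{k-1}:=\sum_{j<k}\varepsilon_j b_j$, pick $\varepsilon_k$ so that $\varepsilon_k b_k=-\operatorname{sgn}(s_{k-1})\,|b_k|$ — that is, so the new term opposes the current partial sum (when $s_{k-1}=0$ or $b_k=0$, take $\varepsilon_k=+1$). Then $|s_k|=\bigl|\,|s_{k-1}|-|b_k|\,\bigr|$.

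The heart of the proof is the invariant $|s_k|\le R_k$ for all $k\ge 0$, established by induction. The base case is $|s_0|=0\le R_0$. For the inductive step, assume $|s_{k-1}|\le R_{k-1}=|b_k|+R_k$. If $|s_{k-1}|\ge|b_k|$, then $|s_k|=|s_{k-1}|-|b_k|\le R_{k-1}-|b_k|=R_k$. If instead $|s_{k-1}|<|b_k|$, then $|s_k|=|b_k|-|s_{k-1}|\le|b_k|\le R_k$ by hypothesis~(1). Since $R_k\to 0$, the invariant forces $s_k\to 0$, i.e.\ $\sum_{k=1}^{\infty}\varepsilon_k b_k=0$, which is what was needed.

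The only real subtlety I anticipate is the bookkeeping in the inductive step: one must notice that the ``bad'' case $|s_{k-1}|<|b_k|$ — where the greedy move overshoots $0$ — is exactly the case rescued by hypothesis~(1), and one must dispose cleanly of the degenerate subcases $s_{k-1}=0$ and $b_k=0$. Everything else is routine manipulation of absolutely convergent series.
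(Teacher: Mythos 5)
Your proof is correct and takes essentially the same route as the paper: reduce to greedily choosing signs for the pair-differences $b_k$ so that each new term opposes the current partial sum, then use hypothesis (1) to show the partial sums are trapped by the tails $R_k\to 0$. You have in addition written out the inductive invariant $|s_k|\le R_k$ that the paper dismisses as ``easy to check'' (and your greedy rule is oriented correctly, whereas the paper's wording appears to swap the two cases).
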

\begin{proof} For each natural $k$ put $b_k=|a_{2k-1}-a_{2k}|$. It suffices to show that we can
consecutively choose signs for $b_k$, providing $\sum_{k=1}^\infty \pm b_k=0$. At the beginning choose
a sign "$+$" for $b_1$ and for each $k>1$ choose for $b_k$ a sign "$+$",
 if $\sum_{i=1}^{k-1} \pm b_i\ge 0$ and "$-$", otherwise.
It is easy to check that (1) provides $\sum_{k=1}^\infty \pm b_k=0$.
\end{proof}

\begin{proposition}\label{prop1} For each $q\in [1/\sqrt{2},1)$, $SDP(q)$ has a boundedly fair division.
\end{proposition}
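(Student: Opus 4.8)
The plan is to apply Lemma~\ref{lem1} directly with $a_i=q^i$. Since $0<q<1$, the series $\sum_{i=1}^\infty q^i$ is absolutely convergent, so the only hypothesis that needs checking is inequality~(1), and the whole content of the proof is the verification that~(1) holds precisely on the range $[1/\sqrt2,1)$.

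First I would compute the two sides of~(1) for $a_n=q^n$. The left-hand side is $|a_{2k-1}-a_{2k}|=q^{2k-1}(1-q)$, and the right-hand side is the geometric tail
$$\sum_{n=k+1}^\infty|a_{2n-1}-a_{2n}|=(1-q)\sum_{n=k+1}^\infty q^{2n-1}=\frac{(1-q)q^{2k+1}}{1-q^2}=\frac{q^{2k+1}}{1+q}.$$
Thus~(1) holds for every $k$ iff $q^{2k-1}(1-q)\le q^{2k+1}/(1+q)$, i.e.\ iff $(1-q)(1+q)\le q^2$, i.e.\ iff $1\le 2q^2$, which is exactly the condition $q\ge 1/\sqrt2$. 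Note that the bound is independent of $k$, so the threshold $1/\sqrt2$ appearing in the statement is forced by an equality-case argument in~(1) rather than being an artifact of the construction.

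Consequently, for $q\in[1/\sqrt2,1)$, Lemma~\ref{lem1} supplies a sign sequence $(*)_i\in\pm^{\mathbb N}$ with $*_{2k-1}1+*_{2k}1=0$ for every natural $k$ and $\sum_{i=1}^\infty *_iq^i=0$. The first property says the signs cancel within each block $\{2k-1,2k\}$, so $\sum_{i=1}^n *_i1$ equals $0$ for even $n$ and $\pm1$ for odd $n$; in particular $\sum_{i=1}^n *_i1=O(1)$. The second property is exactly $\lim_{n\to\infty}\sum_{i=1}^n *_iq^i=0$. Hence $(*)_i$ is a boundedly fair division for $SDP(q)$. I do not expect a genuine obstacle here; the only point to be careful about is that the conclusion of Lemma~\ref{lem1} is convergence of the partial sums to $0$, not mere boundedness — which is precisely what the limit condition in the definition of a fair division demands — but that is exactly what the lemma delivers.
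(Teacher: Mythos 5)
Your proposal is correct and follows exactly the paper's route: apply Lemma~\ref{lem1} to $a_i=q^i$ and verify that inequality~(1) reduces to $(1-q)(1+q)\le q^2$, i.e.\ $q\ge 1/\sqrt{2}$. You simply spell out the algebra and the final bookkeeping (that pairwise cancellation of signs gives $\sum_{i=1}^n *_i1=O(1)$) that the paper leaves implicit.
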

\begin{proof} By Lemma~\ref{lem1}, it suffices to check that for any any natural $k$ we have
$$q^{2k-1}-q^{2k}\le \sum_{i=k+1}^\infty q^{2i-1}-q^{2i}=\frac{q^{2k+1}}{1+q}.$$
\end{proof}

To improve Proposition~\ref{prop1}, we introduce the following notion.
A number $q\in (1/2, 1)$ is \emph{approximating}, if there exist non-negative numbers $A$ and $N$
such that for each $x_0\in [0,A]$ there exist a balanced $\pm$-polynomial $P$ of degree $n\le N$
such that $|x_0-P(q)|\le Aq^n$.

\begin{proposition} For each approximating number $q$, $SDP(q)$ has a boundedly fair division.
\end{proposition}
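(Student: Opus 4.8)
The plan is to produce a boundedly fair division by concatenating balanced $\pm$-polynomials as consecutive blocks, choosing each block via the approximating property so that a suitably rescaled running error of the geometric stuff stays bounded.

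Fix non-negative numbers $A$ and $N$ witnessing that $q$ is approximating; we may assume $A>0$, since $A=0$ forces $q$ to be a root of a balanced $\pm$-polynomial, and then Proposition~\ref{periodic} already supplies a boundedly fair division. I construct balanced $\pm$-polynomials $P_1,P_2,\dots$ of degrees $n_1,n_2,\dots\le N$ one at a time, together with positions $m_0=0$ and $m_j=m_{j-1}+n_j$, and let $(*)_i\in\pm^{\mathbb N}$ be the sequence obtained by writing the $n_j$ sign-coefficients of $P_j$ on the coordinates $m_{j-1}+1,\dots,m_j$. Since $P_j=\sum_{i=1}^{n_j}\pm x^i$, the block $P_j$ contributes exactly $q^{m_{j-1}}P_j(q)$ to $\sum_i *_i q^i$, so $\sum_{i=1}^{m_j} *_i q^i=\sum_{l=1}^{j}q^{m_{l-1}}P_l(q)=:S_j$. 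Putting $T_j:=S_j/q^{m_j}$ and $T_0:=0$, a short computation gives the recursion $T_j=q^{-n_j}\bigl(T_{j-1}+P_j(q)\bigr)$.

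The core of the construction is to maintain $|T_j|\le A$ for every $j$. Assume $|T_{j-1}|\le A$, so that $-T_{j-1}\in[-A,A]$. Applying the approximating property to $x_0=|T_{j-1}|\in[0,A]$, and replacing the resulting polynomial $P$ by $-P$ (again a balanced $\pm$-polynomial of the same degree) when $-T_{j-1}<0$, we obtain a balanced $\pm$-polynomial $P_j$ of some degree $n_j\le N$ with $|P_j(q)-(-T_{j-1})|\le Aq^{n_j}$. The recursion then yields $|T_j|=q^{-n_j}\,|T_{j-1}+P_j(q)|\le q^{-n_j}\cdot Aq^{n_j}=A$, closing the induction. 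Every balanced $\pm$-polynomial has even degree at least $2$, so $m_j\ge 2j\to\infty$, and hence $|S_j|=|T_j|\,q^{m_j}\le Aq^{m_j}\to 0$.

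Finally I would check that $(*)_i$ is boundedly fair. For $m_{j-1}\le n\le m_j$, the incomplete block adds at most $\sum_{i>m_{j-1}}q^i=q^{m_{j-1}+1}/(1-q)$ in absolute value, so $\bigl|\sum_{i=1}^n *_i q^i\bigr|\le |S_{j-1}|+q^{m_{j-1}+1}/(1-q)\le q^{m_{j-1}}\bigl(A+q/(1-q)\bigr)\to 0$, i.e.\ $\lim_{n\to\infty}\sum_{i=1}^n *_i q^i=0$. Each complete block $P_l$ is balanced, hence $\sum_{i=1}^{m_{j-1}} *_i 1=0$, and the partial sum of signs inside the current block has absolute value at most $n_j\le N$, so $\bigl|\sum_{i=1}^n *_i 1\bigr|\le N$ for every $n$; thus $\sum_{i=1}^n *_i 1=O(1)$. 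Hence $(*)_i$ is a boundedly fair division for $SDP(q)$. The proof is essentially a bookkeeping argument, and the one point to get right — what I would be most careful about — is the recursion $T_j=q^{-n_j}(T_{j-1}+P_j(q))$: it is precisely this rescaling that lets the error $Aq^{n_j}$ permitted by the definition of an approximating number absorb the blow-up factor $q^{-n_j}$, so that boundedness of $T_j$, and therefore convergence of $S_j$ to $0$, is preserved.
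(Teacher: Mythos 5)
Your proof is correct and follows essentially the same route as the paper's: you build the sign sequence from consecutive balanced $\pm$-polynomial blocks of degree at most $N$, using the approximating property (with a sign flip when the target is negative) to maintain the invariant $\bigl|\sum_{i=1}^{m_j} *_i q^i\bigr|\le Aq^{m_j}$, which is exactly the paper's inductive construction. You simply make explicit the bookkeeping (the rescaled recursion for $T_j$, the tail estimate inside an incomplete block, and the bound $N$ on the running sign sum) that the paper leaves to the reader.
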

\begin{proof} Suppose that the number $q$ is approximating with the constants $A$ and $N$. Put $k_0=0$.
Then we can inductively build an increasing sequence $(k_n)$ of natural numbers such that
$k_{n+1}-k_n\le N$ for each $n$, assigning signs $*_i$ to numbers $q^i$ for $k_{n-1}<i\le k_n$
(a half of the assigned signs are "$+$" and the other half are "$-$") assuring
$\sum_{i=1}^{k_n} \pm q^i\le Aq^{k_n}$.  The construction assures that $(*)_i$ is
a boundedly fair division for $SDP(q)$.
\end{proof}

This suggests the following

\begin{question} Which numbers $q\in (1/2,1)$ are approximating?
\end{question}

The following proposition provides a partial answer to it.
Let $q_\infty=0.5845751\dots$ be a unique positive root of a polynomial $Q_\infty(x)=x^4+x^3+2x^2-1$.

\begin{proposition}\label{prop:qinfty} Each number $q\in (q_\infty,1)$ is approximating.
\end{proposition}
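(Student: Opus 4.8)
The plan is to reduce the statement to a single ``self‑similar'' covering problem and then to solve that covering problem by an explicit, if somewhat elaborate, list of balanced $\pm$‑polynomials. Unwinding the inductive construction used in the proof of the previous proposition, it suffices to find $A>0$ and an even $N$ so that for every $t\in[-A,A]$ there is a balanced $\pm$‑polynomial $R$ of (even) degree $d\le N$ with $|t-R(q)|\le Aq^{d}$; indeed, $t$ is to be read as $(x_{0}-S)/q^{k}$ after signs have been assigned to $q^{1},\dots,q^{k}$ with running sum $S$, and appending the block corresponding to $R$ at positions $k+1,\dots,k+d$ multiplies $R(q)$ by $q^{k}$ and rescales the error bound accordingly. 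Using the symmetry $R\mapsto -R$, this amounts to covering the interval $[0,A]$ by the finitely many intervals $\bigl[R(q)-Aq^{\deg R},\,R(q)+Aq^{\deg R}\bigr]$ as $R$ runs over balanced $\pm$‑polynomials of degree at most $N$.

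I would fix $q\in(q_\infty,1)$, take $N$ a large even number, and take $A$ just below $q/(1-q)$; concretely $A=A(N):=\frac{q(1-q^{N/2})}{(1-q)(1+q^{N/2})}$ works, since then the ``biased'' block $+^{N/2}-^{N/2}$ has value $\frac{q(1-q^{N/2})^{2}}{1-q}=A(1-q^{N})$, whose precision window $[\,\cdot\,-Aq^{N},\,\cdot\,+Aq^{N}]$ reaches exactly up to $A$. The remaining values I would cover with three families: near $0$, the ``almost‑cancelling'' blocks $+--+(-+)^{k}$ of degrees $4,6,8,\dots$, whose values shrink from $q-q^{2}-q^{3}+q^{4}$ down toward $\frac{q(1-2q^{2})}{1+q}$, so that the shortest one already traps a neighbourhood of $0$ once $A\ge\frac{(1-q)^{2}(1+q)}{q^{3}}$; near the top, the biased blocks $+^{\ell}-^{\ell}$ together with their single‑transposition perturbations; and, in between, ``hybrid'' blocks obtained by splicing a short biased head onto a short almost‑cancelling tail, chosen so that consecutive achievable values stay within the sum of their precision windows.

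Granting such a list, ``$[0,A(N)]$ is covered'' unpacks into the two endpoint conditions above plus finitely many ``adjacency'' inequalities, one for each pair of consecutive blocks, each of the form (difference of two block values) $\le$ (sum of the two precision windows). Letting $N\to\infty$, I expect all but one of these to collapse to inequalities valid for every $q\in(1/2,1)$, and the surviving bottleneck adjacency --- which should be slightly stronger than mere compatibility of the two endpoint conditions $\frac{(1-q)^{2}(1+q)}{q^{3}}\le A<\frac{q}{1-q}$ --- to clear denominators to $q^{4}+q^{3}+2q^{2}>1$, that is $Q_\infty(q)>0$. Since $Q_\infty$ is strictly increasing on $(0,\infty)$ with unique positive root $q_\infty$, this holds exactly for $q>q_\infty$; and once it holds with room to spare one may fix a finite $N$ (which must, however, be taken larger and larger as $q\downarrow q_\infty$, so that $q_\infty$ is the infimum of the approximating numbers produced this way, not a uniform bound).

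The main obstacle is designing the middle (``hybrid'') family so that the chain of adjacency conditions is genuinely finite and so that its worst link is governed precisely by $Q_\infty(q)>0$ rather than by some strictly stronger polynomial inequality: one must locate the last point of $[0,A]$ to become covered and show that the precision available there is exactly what $Q_\infty(q)>0$ guarantees. Verifying the individual adjacency and endpoint inequalities, once the list of blocks is pinned down, is routine algebra in $q$.
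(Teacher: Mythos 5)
Your general framework --- cover $[0,A]$ by the windows $[R(q)-Aq^{\deg R},\,R(q)+Aq^{\deg R}]$ attached to finitely many balanced $\pm$-polynomials, with the topmost window reaching $A$ and the bottom one trapping $0$, then let $N\to\infty$ to identify the threshold --- is the same as the paper's, and your two endpoint computations (the value $A(1-q^{N})$ of the block $+^{N/2}-^{N/2}$ and the value $q(1-q)^2(1+q)$ of $+--+$) are correct. But the argument has a genuine hole exactly where you place ``the main obstacle'': the middle (``hybrid'') family is never exhibited, and the assertion that the surviving bottleneck adjacency ``should'' clear denominators to $q^4+q^3+2q^2>1$ is a hope, not a derivation. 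Nothing in the proposal shows that such a family exists, that its chain of adjacency inequalities is finite and uniformly controlled, or that the worst link is governed by $Q_\infty(q)>0$ rather than by some strictly stronger polynomial condition. As you yourself observe, compatibility of your two endpoint conditions alone gives a threshold strictly below $q_\infty$, so the decisive inequality lives entirely in the unconstructed middle; since the whole content of the proposition is that the threshold is exactly $q_\infty$, the proof is not complete. Your choice of $A$ just below $q/(1-q)$ also makes the task harder than necessary: it maximizes the interval to be covered and forces a long, heterogeneous list of blocks with many adjacency conditions to verify.

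The paper sidesteps all of this by choosing the covering family so that there is essentially \emph{one} adjacency inequality. It takes $P_n(x)=x-x^{2n}+\sum_{i=2}^{2n-1}(-x)^i$, an increasing chain $P_1(q)<P_2(q)<\dots<P_N(q)$, and sets $A=P_N(q)/(1-q^{2N})$, which tends to $P_\infty(q)=q+\frac{q^2}{1+q}$ (much smaller than your $q/(1-q)$). The key computation is $P_{n+1}(q)-P_n(q)=q^{2n}(1-q)(2+q)$, so the normalized gap $\frac{P_{n+1}(q)-P_n(q)}{q^{2n}+q^{2n+2}}=\frac{2-q-q^2}{1+q^2}$ is independent of $n$; all adjacency conditions collapse to the single inequality $\frac{2-q-q^2}{1+q^2}<P_\infty(q)$, which after clearing denominators is literally $Q_\infty(q)>0$, and the endpoint conditions are checked separately. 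To salvage your version you would have to replace the hoped-for hybrid family by an explicit self-similar chain of this kind (or verify your adjacencies one by one and show the worst one is exactly $Q_\infty(q)>0$); at present that is the entire missing content of the proof.
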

\begin{proof} For each natural $n$ put
$$P_n(x)=x-x^{2n}+\sum_{i=2}^{2n-1} (-x)^i=x-x^{2n}+\frac{x^2-x^{2n}}{1+x}.$$
Clearly, $P_n(x)$ is a balanced $\pm$-polynomial. Given a natural number $N$, put
$A=\tfrac {P_N(q)}{1-q^{2N}}$. A segment $[0,A]$ is covered by a family $$[0, P_1(q)],
[P_1(q),P_2(q)],\dots, [P_{N-1}(q),P_N(q)], [P_N(q), A]$$ of segments. So
in order to show that for each $x_0\in [0,A]$ there exist $1\le n\le N$ such that
$|x_0-P_n(q)|\le Aq^{2n}$ it suffices to show that that each point of a segment
of the family lies sufficiently close to a suitable endpoint of the segment.
Namely, that $Aq^2\ge P_1(q)$,
$|P_{n+1}(q)-P_n(q)|\le A(q^{2n}+q^{2n+2})$ for each $1\le n\le N-1$, and $P_N(q)+Aq^{2N}\ge A$.

The last inequality follows from the definition of $A$. It is easy to check that for
each $1\le n\le N-1$ we have
$$\frac{|P_{n+1}(q)-P_n(q)|}{q^{2n}+q^{2n+2}}=\frac{2-q-q^2}{1+q^2}.$$
Put $P_\infty(x)=x+\frac{x^2}{1+x}$. When $N$ tends to the infinity,
$P_\infty(q)-P_N(q)/(1-q^{2N})$ tends to zero. Since we can choose $N$ arbitrarily big, it suffices to
show that
$$\frac{2-q-q^2}{1+q^2}<P_\infty(q),$$
that is $Q_\infty(q)>0$, which holds because $q>q_\infty$.
Finally, the inequality $Aq^2\ge P_1(q)$ for all sufficiently big $N$ holds because
$$(q+q^2)\left(P_\infty(q)-\frac{P_1(q)}{q^2}\right)=-1+2q^2+2q^3>-1+2q^2_\infty+2q^3_\infty>0.$$
\end{proof}

\begin{remark} The approach from the proof of Proposition~\ref{prop:qinfty} cannot
provide that numbers not bigger than $1/\sqrt{3}=0.5773\dots$ are approximating. Indeed,
suppose that for a given $q\in (1/2,1)$ there exist a positive number $A$ and
balanced $\pm$-polynomials $P_1,\dots, P_n$ of degrees $2,\dots,2n$ respectively
such that for each $x_0\in [0,A]$ there exist $1\le i\le n$ such that $|x_0-P_i(q)|\le Aq^{2i}$.
Then
$$A\le 2A(q^2+q^4+\dots+q^{2n})<2A(q^2+q^4+\dots)=A\frac {2q^2}{1-q^2},$$
which follows $q>1/\sqrt{3}$.

\end{remark}

\end{document}